\documentclass[a4paper,10pt]{article}
\usepackage[hmarginratio=1:1, bottom=3.5cm, top=2.5cm]{geometry}
\usepackage{amssymb,amsthm,amsmath,latexsym,url,graphicx, comment}
\usepackage[english]{babel} 
\usepackage[dvipsnames]{xcolor}
\usepackage[utf8]{inputenc}
\usepackage[sc,osf]{mathpazo}
\usepackage[euler-digits]{eulervm}
\usepackage[T1]{fontenc}
\usepackage{mathtools}
\usepackage{mathtools}
\usepackage{pdfpages}
\usepackage{float}
\usepackage[colorlinks=true,pdftex,unicode=true,linktocpage,bookmarksopen,hypertexnames=false]{hyperref}
\theoremstyle{dotless}

\newtheorem{theorem}{Theorem}[section]
\newtheorem{proposition}[theorem]{Proposition}
\newtheorem{lemma}[theorem]{Lemma}
\newtheorem{corollary}[theorem]{Corollary}
\theoremstyle{remark}
\newtheorem{remark}[theorem]{Remark}

\newcommand{\Aut}{\operatorname{Aut}}
\newcommand{\Inn}{\operatorname{Inn}}
\newcommand{\Out}{\operatorname{Out}}
\newcommand{\Triv}{\operatorname{Triv}}
\newcommand{\PSU}{\operatorname{PSU}}
\newcommand{\PSL}{\operatorname{PSL}}
\newcommand{\PGL}{\operatorname{PGL}}

\title{A Finite Skew Brace with Perfect Additive Group and Almost Simple Multiplicative Group}
\author{Massimiliano Di Matteo \and Ferrara Maria \footnote{The authors are members of \textit{National Group for Algebraic and Geometric Structures, and their Applications} (GNSAGA--INdAM), and members of the non-profit association \textit{Advances in Group Theory and Applications}.}}
\date{ }

\begin{document}
\maketitle

\begin{abstract} 
We construct a finite skew brace whose additive group is perfect and whose
multiplicative group is non-perfect and almost simple.  This gives an
affirmative answer to Problem 20.109 in the twenty-first edition of the
Kourovka Notebook.  The additive and multiplicative groups of our example are
\[
  \PSU_5(64)\times A_5
  \quad\text{and}\quad
  \Aut(\PSU_5(64)),
\]
respectively.  The construction combines a skew brace with additive group
\(A_5\) and multiplicative group
\((C_5\rtimes C_4)\times C_3\), the splitting of the automorphism extension
of \(\PSU_5(64)\), and a semidirect product of skew braces.\\

\noindent {\bf Keywords:} Skew brace; perfect group; almost simple group; regular subgroup; unitary
group; outer automorphism. 

\bigskip
\noindent {\bf 2020 Mathematics Subject Classification:} Primary 16T25; Secondary 20D05, 20D06, 20E32.
\end{abstract}

\section{Introduction}
A \textit{(left) skew brace} is an algebraic structure $(B,+,\circ)$, where $(B,+)$ and $(B,\circ)$ are (not necessarily abelian) groups and 
\begin{equation}\label{eq:brace-identity}
 a\circ(b+c)=a\circ b-a+a\circ c
 \qquad(a,b,c\in B).
\end{equation}
- this is known as the \textit{left skew distributivity law}. If also the right skew distributivity holds the skew brace is called \textit{two-sided}. Nevertheless, in this paper, whenever we talk about skew braces, we are referring to left skew braces, as it is the most general setting. It is relevant to note that skew braces were introduced in their present form
by Guarnieri and Vendramin~\cite{GuarnieriVendramin} and provide an algebraic
framework for non-degenerate set-theoretic solutions of the Yang--Baxter
equation.

In the twenty-first edition of the
Kourovka Notebook, the authors have proposed a list of several unsolved problems in skew brace theory~\cite{Kourovka}. In particular, Problem 20.109 has got to our attention:

\begin{quote}
\emph{Is there a finite skew brace with perfect additive group and non-perfect
almost simple multiplicative group?}
\end{quote}

Here perfectness is ordinary group-theoretic perfectness, so a perfect group is a group that coincides with its commutator subgroup. Instead, a finite group \(G\) is \textit{almost simple} if there is a finite non-abelian simple
group \(S\), which will be called the \textit{socle} of $G$, such that
\[
  S\leq G\leq\Aut(S).
\]

The aim of this paper is to construct a skew brace $B$ with the features described in Problem 20.109, and hence giving a positive answer to the question. The simple socle in our construction will be
\(\PSU_5(64)\).

\begin{theorem}\label{thm:main}
There exists a finite skew brace \(\mathcal B=(\mathcal B,+,\circ)\) such that
\[
 (\mathcal B,+)\simeq \PSU_5(64)\times A_5
\]
and
\[
 (\mathcal B,\circ)\simeq \Aut(\PSU_5(64)).
\]
The additive group is perfect, while the multiplicative group is non-perfect
and almost simple with socle \(\PSU_5(64)\).
\end{theorem}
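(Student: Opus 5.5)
The construction follows the outline in the abstract. The ingredients are a skew brace on $A_5$, a split extension for $\Aut(\PSU_5(64))$, and a semidirect product of skew braces.

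\textbf{Outer automorphism group.} Put $q=64=2^6$ and $S=\PSU_5(q)$, so $S=\PSU_5(2^6)$ over $\mathbb F_{q^2}=\mathbb F_{2^{12}}$. We have $d=\gcd(5,q+1)=\gcd(5,65)=5$, so the diagonal automorphisms contribute $C_5$. The field automorphisms of $\mathbb F_{2^{12}}$ form a cyclic group $C_{12}$, generated by the Frobenius $\phi$. Since $C_{12}\cong C_4\times C_3$, we get
\[
\Out(S)\cong C_5\rtimes C_{12}\cong (C_5\rtimes C_4)\times C_3 .
\]
Here $\phi$ acts on the diagonal $C_5$ by raising to the power $2$ (or $-2$), which has order $4$ modulo $5$. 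Hence $\phi^4$ centralises $C_5$, and the $C_3$ generated by $\phi^4$ is a direct factor. The $C_4$ acts faithfully, so $C_5\rtimes C_4$ is the Frobenius group of order $20$.

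\textbf{Splitting of the extension.} Next we show that $1\to S\to\Aut(S)\to\Out(S)\to1$ splits. We take $\delta$ to be the diagonal automorphism induced by $\mathrm{diag}(\lambda,1,\dots,1)$ with $\lambda^{q+1}=1$ of order $5$, realised in $\PGU_5(q)$. We take $\phi$ to be the standard Frobenius on matrix entries. The main obstacle is to choose these so that $\delta$ has order exactly $5$ in $\Aut(S)$, $\phi$ has order $12$, and $\phi\delta\phi^{-1}=\delta^{\pm2}$ holds on the nose and not merely modulo $\Inn(S)$.

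First I would try representatives of $\PGU_5$ coming from $\GU_5$ with an explicit Hermitian form fixed by $\phi$. The relevant point is that $\gcd(5,q+1)=5$ while $\gcd(5,(q+1)/5)=\gcd(5,13)=1$, so an element of order $5$ in $\GU_5$ with determinant of order $5$ should exist, lying in a torus normalised by $\phi$. For instance, take $\delta=\mathrm{diag}(\mu,\dots,\mu,\mu^{-4}\lambda)$ with the scalars adjusted to give order $5$, or use a Singer-type torus element. Alternatively, I would cite the known splitting criterion for the extension of $\PSU_n(q)$ by its outer automorphism group. Either way the result is a complement $K\le\Aut(S)$ with $K\cong(C_5\rtimes C_4)\times C_3$, so $\Aut(S)=S\rtimes K$.

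\textbf{Skew brace on $A_5$.} We need a skew brace with additive group $A_5$ and multiplicative group $K$. Equivalently, $K$ must embed as a regular subgroup of $\mathrm{Hol}(A_5)=A_5\rtimes S_5$. I would take $K=F_{20}\times C_3$, with $F_{20}\le S_5$ realised as the normaliser of a Sylow $5$-subgroup. The regular subgroup would be $\{(\beta(k),\alpha(k))\}$, where $\alpha$ maps $K$ onto $S_5$-conjugations or left translations and $\beta$ is a bijective $1$-cocycle. Concretely, I would use the exact factorisation $A_5=A_4\cdot C_5$ and transport it. The left translations by $C_5$ give a regular $C_5$ part, while $C_4$ (conjugation by a $4$-cycle normalising $C_5$) and $C_3$ (right translations inside $A_4$) are combined via the standard bi-skew/product construction. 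The cocycle condition is a finite check, and regularity is checked by orbit counting. The resulting skew brace is $(A_5,+,\circ)$ with $(A_5,\circ)\cong K$.

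\textbf{Assembling $\mathcal B$.} The last step is the semidirect product of skew braces. We let $S$ carry the trivial brace $(S,\cdot,\cdot)$, and let $K$ act on it through its action $\sigma\colon(A_5,\circ)\cong K\to\Aut(S)$, which preserves the trivial brace structure. The semidirect product $S\rtimes_\sigma A_5$ then has additive group $S\times A_5$ (the direct product of the additive groups) and multiplicative group $S\rtimes K\cong\Aut(S)$. The standard verification of the brace identity for semidirect products applies here.

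\textbf{Final properties.} The group $S\times A_5$ is perfect, being a product of perfect groups. The group $\Aut(S)$ has nontrivial abelianisation $C_4\times C_3$, so it is not perfect. It is almost simple with socle $S$.
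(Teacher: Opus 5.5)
Your outline matches the paper's: compute $\Out(S)\simeq F_{20}\times C_3$, split $\Aut(S)=S\rtimes H$, and form the semidirect product of $\Triv(S)$ with a brace on $A_5$. Those steps and the perfectness arguments are fine. Your first explicit choice for the splitting already works, so you need not hedge. Take $\delta=\mathrm{diag}(\omega,1,1,1,1)$ with $\omega$ of order $5$. It is unitary, since $5\mid 65$. Its determinant is not a fifth power of a $65$th root of unity (those have order dividing $13$), so it gives an element of order $5$ generating $\mathrm{PGU}_5(64)/S$. The entrywise Frobenius satisfies $\phi\delta\phi^{-1}=\delta^2$ exactly. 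The paper instead quotes the Lucchini--Menegazzo--Morigi criterion.

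\textbf{The gap: the skew brace on $A_5$.} The missing step is the skew brace on $A_5$ with multiplicative group $F_{20}\times C_3$. You never exhibit a regular subgroup of $\mathrm{Hol}(A_5)$, and the pieces you name cannot form one.

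\emph{Conjugation by a $4$-cycle as a generator.} A pure automorphism such as conjugation by a $4$-cycle fixes the identity of $A_5$, so it lies in no regular subgroup.

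\emph{Conjugation by a $4$-cycle as an automorphism part.} It cannot even be the automorphism part of a twisted generator. Every element of $\mathrm{Hol}(A_5)$ is uniquely of the form $\eta\mapsto g\eta h^{-1}$ with $g,h\in S_5$ of equal sign, and its automorphism part is conjugation by $h$. Such a map commutes with the right translation $\eta\mapsto\eta a^{-1}$ if and only if $h$ centralises $a$. For $a=(123)$ this forces $h\in C_{S_5}(a)=\langle(123)\rangle\times\langle(45)\rangle$, which contains no $4$-cycle. So your $C_4$ cannot centralise your $C_3$, as it must in $F_{20}\times C_3$.

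\emph{The factorisation you chose.} The factorisation $A_5=A_4\cdot C_5$ gives the regular subgroup $\{\eta\mapsto a\eta c^{-1}: a\in A_4,\ c\in C_5\}\simeq A_4\times C_5$, not $F_{20}\times C_3$. The phrases ``transport it'', ``the standard bi-skew/product construction'' and ``a finite check'' do not produce the missing object.

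\textbf{What is needed.} You need an exact factorisation of $\Aut(A_5)\simeq S_5$, not of $A_5$. This is Tsang's construction as used in the paper: $S_5\simeq\PGL_2(5)=XY$ with $X\simeq C_6$ a Singer cycle, $Y\simeq F_{20}$ a point stabiliser, and $X\cap Y=1$. Concretely, take $X=\langle(123)(45)\rangle$, which is exactly the centraliser above, and $Y=N_{S_5}(\langle(12345)\rangle)$.

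The maps $\eta\mapsto y\eta x^{-1}$ with $x\in X$, $y\in Y$ of equal sign form a subgroup of $\mathrm{Hol}(A_5)$ of order $60$. Only the identity fixes $1$, since $y=x$ forces $x\in X\cap Y=1$, so the subgroup is regular. As $X$ is abelian, it is isomorphic to $Y\times\langle(123)\rangle\simeq F_{20}\times C_3$.

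Its $C_4$-generator is $\eta\mapsto t\eta(45)$ for a $4$-cycle $t\in Y$, i.e.\ conjugation by the transposition $(45)$ followed by left translation by $t(45)$. This twist by an odd element centralising the $C_3$ is the idea your sketch is missing.
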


It will be important to recall some important properties about $\Aut(\PSU_5(64))$. Of course, since $\PSU_5(64)$ is a non-abelian simple group, then $\Inn(\PSU_5(64))\simeq \PSU_5(64)$; instead its outer automorphism group is
\[
  \Out(\PSU_5(64))\simeq (C_5\rtimes C_4)\times C_3,
\]
which is a solvable group and can occur as the moltiplicative group of a skew brace whose additive group is isomorphic to $A_5$.\\
The most significant fact for our purposes is that, for $\PSU_5(64)$, the canonical short exact sequence
$$0\rightarrow \Inn(\PSU_5(64))\rightarrow\Aut(\PSU_5(64))\rightarrow  \Out(\PSU_5(64))\rightarrow 0$$
is split and so $\Aut(\PSU_5(64))\simeq \Inn(\PSU_5(64))\rtimes\Out(\PSU_5(64))$. This result is crucial, as it will give us the possibility to use a general semidirect product construction for skew braces. 
  

Throughout the paper every skew brace is denoted by \((B,+,\circ)\). Thus \(+\) and \(\circ\) are always, respectively, its additive and multiplicative
operations. Juxtaposition is used only for products or composition in auxiliary ordinary groups, never for an operation of a skew brace. 

\section{Two skew-brace constructions}

Let $(B,+,\circ)$ a skew brace, we will denote $-a$ the additive inverse of any element $a$ in $B$. Moreover, the \textit{lambda function} of $B$ is the following group homomorphism:
\[
    a\in (B,\circ)\longmapsto \lambda_a\in \operatorname{Aut}(B,+).
\]
where $\lambda_a(b)=-a+a\circ b$ for any $a$ and $b$ in $B$.

\subsection{The auxiliary brace of order sixty}

Let
\[
 F_{20}=C_5\rtimes C_4
 =\langle u,v\mid u^5=v^4=1,\ vuv^{-1}=u^2\rangle.
\]

\begin{proposition}\label{prop:auxiliary}
There exists a skew brace \(C=(C,+,\circ)\) satisfying
\[
 (C,+)\simeq A_5,
 \qquad
 (C,\circ)\simeq F_{20}\times C_3.
\]
\end{proposition}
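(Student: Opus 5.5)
The standard route is the regular subgroup correspondence of Guarnieri and Vendramin. For a group \((A,+)\), skew braces with additive group \(A\) correspond to regular subgroups of the holomorph \(\operatorname{Hol}(A)=A\rtimes\Aut(A)\), and the multiplicative group is isomorphic to the regular subgroup. So I will look for a regular subgroup of \(\operatorname{Hol}(A_5)\) isomorphic to \(F_{20}\times C_3\).

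The simplest way to get one is from an exact factorization. If a group \(G\) contains subgroups \(H,K\) with \(G=HK\) and \(H\cap K=1\), then \(G\) acts regularly on \(G\) via \(g\mapsto h g k^{-1}\) for the pair \((h,k)\in H\times K\). This gives a skew brace with additive group \(G\) and multiplicative group \(H\times K\). Concretely, \(H\times K\) embeds in \(\operatorname{Hol}(G)\) through left translations by \(H\) and right translations by \(K\); the latter are inner automorphisms composed with left translations.

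In our case take \(G=A_5\), \(H=F_{20}\cap A_5\) and \(K\) a Sylow \(3\)-subgroup. But \(F_{20}\not\le A_5\): the normaliser of a Sylow \(5\)-subgroup in \(A_5\) is \(D_{10}\), and \(D_{10}\times C_3\) is not what we want. So instead I use \(\operatorname{Hol}(A_5)=A_5\rtimes S_5\), where \(\Aut(A_5)=S_5\) contains \(F_{20}\).

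The plan is to realize \(A_5\) as \(S_5/\!\!\) a regular-orbit set, via the bijective \(1\)-cocycle criterion. A subgroup \(R\le\operatorname{Hol}(A_5)\) is regular iff \(R\) is the graph \(\{(\pi(r),\alpha(r))\}\) of maps \(\pi\colon R\to A_5\), \(\alpha\colon R\to S_5\), where \(\alpha\) is a homomorphism and \(\pi\) is a bijective cocycle. Equivalently, \(R\) acts on \(A_5\) by \(x\mapsto \pi(r)+\alpha(r)(x)\) regularly. I choose:
\begin{itemize}
\item the \(F_{20}\) factor acting by \(x\mapsto \sigma x\sigma^{-1}\cdot\)(left translation part), with \(\sigma\in F_{20}\le S_5\);
\item the \(C_3\) factor acting by right multiplication \(x\mapsto x c^{-1}\) with \(c\in A_5\) of order \(3\).
\end{itemize}
Concretely, let \(F_{20}\) act on \(A_5\) by \(x\mapsto d\,\sigma(x)\) in a way that is twisted so it becomes left multiplication by \(D_{10}\) on its even part. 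Explicitly, \(\sigma\in F_{20}\) acts by \(x\mapsto \sigma x \tau^{\epsilon(\sigma)}\), where \(\tau\) is a fixed transposition commuting appropriately and \(\epsilon\) is the sign. The cleaner version of this map: \(F_{20}\times C_3\) acts on the set \(S_5/\langle\tau\rangle\), which has \(60\) elements. Since \(\langle\tau\rangle\) is a complement to \(A_5\) in \(S_5\), this set is identified with \(A_5\).

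The key step is to verify that this action is by elements of \(\operatorname{Hol}(A_5)\), i.e. by affine maps \(x\mapsto a\,\varphi(x)\), and that it is regular. Regularity reduces to two facts: \(|F_{20}\times C_3|=60\), and freeness, meaning the point stabiliser is trivial. Freeness follows if \(F_{20}\cap {}^{g}(\text{stuff})=1\), which I would check using that \(F_{20}\) meets \(C_3\)-conjugates trivially and that \(S_5=F_{20}\cdot S_3\)-type exact factorizations hold (\(|F_{20}||S_3|/|F_{20}\cap S_3|=120\)).

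I expect the main obstacle to be the affineness check. If it fails, I would fall back on a direct search: exhibit the bijective cocycle by a finite computation, for example in GAP, using \texttt{RegularSubgroups} of \(\operatorname{Hol}(A_5)\), or cite the Guarnieri–Vendramin enumeration of skew braces of order \(60\). That enumeration lists a brace with additive group \(A_5\) and multiplicative group \(F_{20}\times C_3\) (SmallGroup\((60,7)\)).
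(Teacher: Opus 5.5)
Your construction, once pinned down, is in substance the one the paper takes from Tsang. There, $Y\simeq F_{20}$ acts on the left, $X_0=\langle c\rangle\simeq C_3$ acts on the right, and $\tau$ generates a complement of $X_0$ in the order-$6$ factor $X$ of an exact factorization $S_5=XY$. But the proposal leaves open exactly the point that decides the isomorphism type. Your maps $x\mapsto\sigma x\tau^{\epsilon(\sigma)}$ and $x\mapsto xc^{-1}$ commute if and only if $c$ centralizes $\tau$, i.e.\ $X=\langle c,\tau\rangle\simeq C_6$. Equivalently, in your ``cleaner version'', right multiplication by $c$ is well defined on the cosets $g\langle\tau\rangle$ only under this condition. ``Commuting appropriately'' does not say this, and your freeness sketch points instead to an $S_3$-type factorization, which is the wrong choice. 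With $\langle c,\tau\rangle\simeq S_3$ (say $c=(123)$, $\tau=(12)$), the maps still generate a regular subgroup of $\operatorname{Hol}(A_5)$, because $F_{20}\cap S_3=1$. But that subgroup is $\{x\mapsto\sigma x\rho^{-1}:\sigma\in F_{20},\ \rho\in S_3,\ \operatorname{sgn}\sigma=\operatorname{sgn}\rho\}\simeq C_3\rtimes F_{20}$, with the $4$-cycles inverting $C_3$. This is precisely the nontrivial action $\alpha$ that the paper rules out by taking $X$ to be a Singer cycle, hence abelian. In the SmallGroups library this wrong group is SmallGroup$(60,7)$, while $C_3\times F_{20}$ is SmallGroup$(60,6)$. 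So the identifier in your fallback is off, and a database lookup would in any case replace the argument rather than supply it.

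You have also misplaced the difficulty: affineness is immediate. Whenever $\operatorname{sgn}\sigma=\operatorname{sgn}\rho$, the map $x\mapsto\sigma x\rho^{-1}=(\sigma\rho^{-1})\cdot\rho x\rho^{-1}$ is conjugation by $\rho\in S_5=\Aut(A_5)$ followed by left translation by $\sigma\rho^{-1}\in A_5$. To finish, take $c=(123)$, $\tau=(45)$, $X=\langle c\rangle\times\langle\tau\rangle\simeq C_6$ and $F_{20}=N_{S_5}(\langle(12345)\rangle)$, and define $(\sigma,c^i)\cdot x=\sigma x\tau^{\epsilon(\sigma)}c^{-i}$.
\begin{itemize}
\item This is an action of $F_{20}\times C_3$, because $\epsilon$ is a homomorphism and $c\tau=\tau c$.
\item It lies in $\operatorname{Hol}(A_5)$ by the identity above with $\rho=c^i\tau^{\epsilon(\sigma)}$.
\item $(\sigma,c^i)$ fixes $1$ if and only if $\sigma=c^i\tau^{\epsilon(\sigma)}\in F_{20}\cap X$.
\end{itemize}
The nonidentity elements of $F_{20}$ are $5$-cycles, $4$-cycles and double transpositions. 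Those of $X$ are $3$-cycles, a transposition and elements of order $6$. Hence $F_{20}\cap X=1$ and the stabilizer of $1$ is trivial. The action is therefore faithful, and since $|F_{20}\times C_3|=60$ it is regular; trivial stabilizer at the single point $1$ suffices, with no conjugates needed. The Guarnieri--Vendramin correspondence then yields the required brace.
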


\begin{proof}
We recall the specialization to \(q=5\) of the construction in
\cite[Proposition 2.7 and the proof of Theorem 1.3]{TsangSolvable}.
Put
\[
 L=\PGL_2(5)\simeq S_5,
 \qquad
 N=\PSL_2(5)\simeq A_5.
\]
A Singer cycle \(X\) of \(L\) is cyclic of order six, while the stabilizer
\(Y\) of a point of the projective line has order twenty and is isomorphic to
\(F_{20}\).  The Singer cycle is regular on the six points, so
\[
 L=XY,
 \qquad X\cap Y=1.
\]
Moreover,
\[
 X_0=X\cap N\simeq C_3,
 \qquad
 Y_0=Y\cap N\simeq D_{10},
\]
and both \(X\) and \(Y\) map onto \(L/N\simeq C_2\).

The cited construction gives us a skew brace with additive group \(N\) and
multiplicative group \(X_0\rtimes_\alpha Y\).  The action \(\alpha\) factors
through \(Y/Y_0\simeq C_2\) and is induced by conjugation by the complement
of \(X_0\) in \(X\).  Since \(X\simeq C_6\) is abelian, this action is trivial.
Consequently
\[
 X_0\rtimes_\alpha Y\simeq C_3\times F_{20},
\]
which proves the claim.
\end{proof}

\subsection{A semidirect product}

If \((S,+)\) is a group, let \(\Triv(S)\) denote the trivial skew brace on
\(S\), in which the two operations coincide.  Its skew-brace automorphism
group is \(\Aut(S,+)\).

\begin{lemma}[Semidirect product]\label{lem:semidirect}
Let \(C=(C,+,\circ)\) be a skew brace, let \((S,+)\) be a group, and let
\[
 \theta:(C,\circ)\longrightarrow\Aut(S,+)
\]
be a group homomorphism.  On \(S\times C\), define
\begin{align}
 (s,x)+(t,y)&=(s+t,x+y),                                      \label{eq:sd-add}\\
 (s,x)\circ(t,y)&=(s+\theta_x(t),x\circ y).                  \label{eq:sd-circle}
\end{align}
Then \((S\times C,+,\circ)\) is a skew brace.  Its additive and
multiplicative groups are
\[
 (S,+)\times(C,+)
 \quad\text{and}\quad
 (S,+)\rtimes_\theta(C,\circ),
\]
respectively.
\end{lemma}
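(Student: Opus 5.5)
The plan is to verify the three things the statement asserts, in order: that $(S\times C,+)$ is a group, that $(S\times C,\circ)$ is a group, and that the left skew distributivity law \eqref{eq:brace-identity} holds. The first is immediate, since \eqref{eq:sd-add} is just the direct product of $(S,+)$ and $(C,+)$. The second is the standard semidirect product: \eqref{eq:sd-circle} is exactly the multiplication of $(S,+)\rtimes_\theta(C,\circ)$, where $\theta$ is a homomorphism from $(C,\circ)$ to $\Aut(S,+)$. Hence associativity, the identity $(0,1_\circ)$ and the inverses $(-\theta_{\bar x}(s),\bar x)$, with $\bar x$ the $\circ$-inverse of $x$, come for free. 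I will also note that the two identity elements agree: in any skew brace the additive zero equals the multiplicative identity, so $(0,0)$ is the neutral element of both operations. This identifies the additive and multiplicative groups as claimed.

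The main step is the brace identity. Take $(s,x),(t,y),(r,z)\in S\times C$. The left-hand side is
\[
(s,x)\circ\bigl((t,y)+(r,z)\bigr)=\bigl(s+\theta_x(t+r),\,x\circ(y+z)\bigr)=\bigl(s+\theta_x(t)+\theta_x(r),\,x\circ(y+z)\bigr),
\]
using that $\theta_x$ is an additive automorphism of $S$. The right-hand side is $(s,x)\circ(t,y)-(s,x)+(s,x)\circ(r,z)$. Since the additive structure is componentwise, it equals
\[
\bigl(s+\theta_x(t)-s+s+\theta_x(r),\;x\circ y-x+x\circ z\bigr)=\bigl(s+\theta_x(t)+\theta_x(r),\;x\circ y-x+x\circ z\bigr).
\]
The second coordinates agree because $C$ is a skew brace. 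The first coordinates agree after the cancellation $-s+s=0$. This cancellation is legitimate even though $(S,+)$ is not abelian, because the terms sit adjacently in the correct order.

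I expect no real obstacle. The only point needing care is the order of terms in the nonabelian first coordinate: the subtraction $-(s,x)$ must land between $\theta_x(t)$ and $s$ so that the $s$ terms cancel. With the conventions of \eqref{eq:sd-add} and \eqref{eq:sd-circle} it does, because the $S$-component of the brace is the trivial brace $\Triv(S)$ twisted only by $\theta$.
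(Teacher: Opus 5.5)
Your proof is correct and takes essentially the paper's approach. The paper packages the same componentwise computation as the observation that $\lambda_{(s,x)}(t,y)=(\theta_x(t),\lambda^C_x(y))$ is an automorphism of $(S,+)\times(C,+)$, which is equivalent to the brace identity, whereas you expand the identity directly; in both cases the first coordinate rests on additivity of $\theta_x$ and the second on the brace identity of $C$.
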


\begin{proof}
Equation~\eqref{eq:sd-circle} is the usual semidirect-product group law.  The
lambda map associated with~\eqref{eq:sd-add}--\eqref{eq:sd-circle} is
\[
 \lambda_{(s,x)}(t,y)=\bigl(\theta_x(t),\lambda_x^C(y)\bigr).
\]
Both components are automorphisms of the corresponding direct factors of the
additive group.  Hence every \(\lambda_{(s,x)}\) is an additive automorphism,
which proves~\eqref{eq:brace-identity}.
\end{proof}

Note that this is the usual brace-theoeretic semidirect product, introduced in \cite[Section 2]{SmoktunowiczVendramin}, between \(\Triv(S)\) and \(C\). 

\section{The automorphism group of \texorpdfstring{$\PSU_5(64)$}{PSU5(64)}}
\label{sec:automorphisms}

We record the group-theoretic input needed in the proof.

\begin{proposition}\label{prop:complement}
Let \(S=\PSU_5(64)\).  Then
\[
 \Aut(S)=S\rtimes H
 \qquad\text{with}\qquad
 H\simeq F_{20}\times C_3.
\]
In particular, \(H\cap\Inn(S)=1\).
\end{proposition}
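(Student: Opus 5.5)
The plan is to build the complement \(H\) explicitly inside the standard description of \(\Aut(S)\) and then count orders. Write \(q=64\), so that \(S=\PSU_5(q)\) is defined over \(\mathbb F_{q^2}=\mathbb F_{2^{12}}\). Take the unitary group with respect to the Hermitian form whose Gram matrix is the identity, \(\langle x,y\rangle=\sum_i x_i y_i^{\,q}\). I will use the following classical facts (Steinberg, or the ATLAS/Kleidman--Liebeck description).
\begin{itemize}
\item \(\Aut(S)=\mathrm{PGU}_5(q)\rtimes\langle\varphi\rangle\), where \(\varphi\) is the entrywise Frobenius \(x\mapsto x^2\) on \(\mathbb F_{q^2}\). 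It has order \(12\), and for unitary groups the graph automorphism is already a field automorphism.
\item \(\mathrm{PGU}_5(q)/S\) is cyclic of order \(\gcd(5,q+1)=\gcd(5,65)=5\). It is detected by the determinant modulo fifth powers in \(\mu_{65}=\{\,z : z^{q+1}=1\,\}\).
\end{itemize}
Together these give \(|\Out(S)|=60\), in agreement with the structure recalled in the introduction.

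\emph{The diagonal generator.} Choose \(\omega\in\mathbb F_{q^2}\) of multiplicative order \(5\). Since \(5\mid q+1\), we have \(\omega\in\mu_{65}\). Set \(d=\mathrm{diag}(\omega,1,1,1,1)\). It is unitary for the identity form, and its image \(\bar d\) in \(\mathrm{PGU}_5(q)\) has order \(5\), because \(d^k\) is not scalar for \(0<k<5\). The fifth powers in the cyclic group \(\mu_{65}\) form the subgroup of order \(13\), and \(\det d=\omega\) has order \(5\), so \(\omega\) is not a fifth power. Hence \(\bar d\notin S\), and \(\bar d\) generates \(\mathrm{PGU}_5(q)\) modulo \(S\).

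\emph{The field generator.} The map \(\varphi\) commutes with \(x\mapsto x^q\) and fixes the identity Gram matrix, so it normalizes \(\mathrm{GU}_5(q)\) and induces the field automorphism of order \(12\). Moreover \(\varphi d\varphi^{-1}=\mathrm{diag}(\omega^2,1,1,1,1)=d^2\) exactly.

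\emph{The complement.} Put \(H=\langle \bar d,\varphi\rangle\). It is a quotient of \(C_5\rtimes C_{12}\), where the generator of \(C_{12}\) acts on \(C_5\) by squaring. Squaring has order \(4\) in \(\Aut(C_5)\), so \(\varphi^4\) centralizes \(\bar d\). Splitting \(C_{12}=C_4\times C_3\) then gives \(C_5\rtimes C_{12}\simeq (C_5\rtimes C_4)\times C_3\simeq F_{20}\times C_3\), with the relation \(vuv^{-1}=u^2\) matching \(F_{20}\). So \(|H|\le 60\). On the other hand, the image of \(H\) in \(\Out(S)=\Aut(S)/S\) contains the image of \(\bar d\), which generates the diagonal part, and the image of \(\varphi\), which generates the field part. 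That image is therefore all of \(\Out(S)\), of order \(60\). Hence \(|H|=60\), \(H\simeq F_{20}\times C_3\), and \(H\cap S=1\). It follows that \(\Aut(S)=SH=S\rtimes H\).

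The main obstacle is the first step: justifying \(\Aut(S)=\mathrm{PGU}_5(q)\rtimes\langle\varphi\rangle\) with \(\varphi\) of order exactly \(12\). This means checking both that there are no further graph automorphisms and that the semidirect product is genuinely split on the field part. I would take this from the standard references rather than reprove it, noting that the choice of the identity Hermitian form is what makes \(\varphi\) an honest automorphism of order \(12\) normalizing \(\mathrm{GU}_5(q)\). The remaining verifications are the explicit computations above.
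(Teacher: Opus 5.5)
Your proof is correct, and it takes a different route from the paper. The paper reads off $\Out(S)\simeq C_5\rtimes C_{12}\simeq F_{20}\times C_3$ from the standard description of outer automorphisms of unitary groups. It then gets the splitting abstractly from the Lucchini--Menegazzo--Morigi criterion $\gcd\bigl(\tfrac{q+1}{d},d,m\bigr)=\gcd(13,5,6)=1$, and takes $H$ to be any complement, hence $H\simeq\Out(S)$.

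You instead write the complement down. With the identity Hermitian form, $\bar d$ and the coordinatewise Frobenius $\bar\varphi$ satisfy $\bar d^{5}=\bar\varphi^{12}=1$ and $\bar\varphi\bar d\bar\varphi^{-1}=\bar d^{2}$, so $|H|\le 60$. The surjection of $H$ onto $\Out(S)$ then forces $|H|=60$ and $H\cap S=1$.

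Your only external input is the classical identification $\Aut(S)\simeq\mathrm{P\Gamma U}_5(q)$ with $|\Out(S)|=60$, which is the same information the paper takes from its first citation. The \emph{split on the field part} half of your stated obstacle is already settled by your construction. Indeed, $\varphi$ is a semilinear isometry of order $12$ whose nontrivial powers are not linear, so $\langle\bar\varphi\rangle\cap\mathrm{PGU}_5(q)=1$.

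Your route buys transparency and an explicit $\theta$ for the main theorem. It shows that splitting here is elementary, resting on $\gcd(5,13)=1$: an element of order $5$ in $\mu_{65}$ is not a fifth power, so a diagonal element of order $5$ already represents a generator of the diagonal outer automorphisms. It also reads off the isomorphism type of $H$ from a single conjugation relation. The paper's route is shorter and rests on a general theorem that also covers groups where such a direct choice is not available, at the price of invoking a deep result for what is here a hands-on computation.

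A cosmetic point: to match $vuv^{-1}=u^{2}$ literally, take $u=\bar d$ and $v=\bar\varphi^{9}$, which has order $4$ and acts by $2^{9}\equiv 2\pmod 5$. The central factor is then $\langle\bar\varphi^{4}\rangle\simeq C_3$, and $\bar\varphi=\bar\varphi^{9}\bar\varphi^{4}$.
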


\begin{proof}
Write \(q=64=2^6\) and
\[
 d=\gcd(5,q+1)=5.
\]
For a projective special unitary group of degree at least three, the outer
automorphism group has the form
\[
 \Out(\PSU_n(p^m))
 =\langle\delta\rangle\rtimes\langle\varphi\rangle,
 \qquad
 |\delta|=\gcd(n,p^m+1),\quad |\varphi|=2m,
\]
where \(\delta^\varphi=\delta^p\); see
\cite[Section 4]{CostantiniLucchiniNemmi}.  In the present case,
\[
 \Out(S)\simeq C_5\rtimes C_{12},
\]
and a generator of \(C_{12}\) acts on \(C_5\) by the power map with exponent
two.  This action has order four and kernel of order three.  Hence
\begin{equation}\label{eq:outer-structure}
 \Out(S)\simeq (C_5\rtimes C_4)\times C_3
 \simeq F_{20}\times C_3.
\end{equation}

It remains to lift the outer group to a complement.  The splitting criterion
of Lucchini, Menegazzo and Morigi~\cite{LucchiniMenegazzoMorigiSplit} says
that for a twisted group, other than the listed orthogonal and Tits
exceptions, the full automorphism extension splits precisely when
\[
 \gcd\!\left(\frac{q+1}{d},d,m\right)=1.
\]
For \(S=\PSU_5(64)\), $q=64$, $d=\gcd(5,65)=5$ and $m=6$, so this is
\[
 \gcd(13,5,6)=1.
\]
Therefore \(\Aut(S)\) splits over \(S=\Inn(S)\).  A complement \(H\) is isomorphic
to \(\Out(S)\), and~\eqref{eq:outer-structure} finishes the proof.
\end{proof}

\section{Proof of the main theorem}

Let \(C=(C,+,\circ)\) be the skew brace supplied by
Proposition~\ref{prop:auxiliary},  \(S=\PSU_5(64)\), written additively, and choose the complement \(H\leq\Aut(S,+)\) from
Proposition~\ref{prop:complement}, such that $(C,\circ)\simeq F_{20}\times C_3\simeq H$.

\begin{proof}[Proof of Theorem~\ref{thm:main}]
We may choose an isomorphism
\[
 \theta:(C,\circ)\longrightarrow H\leq\Aut(S,+).
\]
and apply Lemma~\ref{lem:semidirect} to \(\Triv(S)\) and \(C\), and denote the
resulting skew brace by
\[
 \mathcal B=\Triv(S)\rtimes_\theta C.
\]
By construction,
\begin{equation}\label{eq:underlying-groups}
 (\mathcal B,+)\simeq S\times A_5,
 \qquad
 (\mathcal B,\circ)\simeq S\rtimes H\simeq\Aut(S).
\end{equation}
Both \(S\) and \(A_5\) are non-abelian simple groups and therefore perfect.
It follows from~\eqref{eq:underlying-groups} that
\[
 [S\times A_5,S\times A_5]=S\times A_5;
\]
hence the additive group of \(\mathcal B\) is perfect.

The multiplicative group is \(\Aut(S)\), so it is almost simple with socle
\(S\).  Finally,
\[
 (\mathcal B,\circ)/S\simeq H\simeq F_{20}\times C_3.
\]
The derived subgroup of \(F_{20}\) is \(C_5\), and therefore
\[
 H_{\mathrm{ab}}\simeq C_4\times C_3\simeq C_{12}.
\]
Thus \(H\) is not perfect.  Since every quotient of a perfect group is
perfect, \((\mathcal B,\circ)\) cannot be perfect.  This proves every
assertion.
\end{proof}

\begin{remark}
The construction gives a skew left brace; no two-sidedness is asserted.
Indeed, a finite two-sided skew brace with perfect additive group has perfect
multiplicative group~\cite[Corollary 1]{DamelePerfect}.
\end{remark}

\newpage

\begin{flushleft}
\rule{8cm}{0.4pt}\\
\end{flushleft}

{
\sloppy
\noindent
Massimiliano di Matteo

\noindent
Dipartimento di Matematica e Fisica

\noindent
Università degli Studi della Campania  ``Luigi Vanvitelli''

\noindent
viale Lincoln 5, Caserta (Italy)

\noindent
e-mail: massimiliano.dimatteo@unicampania.it 
}

\bigskip
\bigskip

{
\sloppy
\noindent
Maria Ferrara

\noindent
Dipartimento di Ingegneria

\noindent
Facoltà di Ingegneria e Informatica

\noindent
Università Pegaso\\
Centro Direzionale Isola F2 - Napoli (Italy)

\noindent
e-mail: maria.ferrara1@unipegaso.it 

}


{\small
\begin{thebibliography}{99}

\bibitem{CostantiniLucchiniNemmi}
M.~Costantini, A.~Lucchini and D.~Nemmi,
\emph{Abelian supplements in almost simple groups},
Forum Math. Sigma \textbf{13} (2025), e14, 1--33.
\href{https://doi.org/10.1017/fms.2024.160}{doi:10.1017/fms.2024.160}.

\bibitem{DamelePerfect}
M.~Damele,
\emph{On finite perfect two-sided skew braces},
preprint, 2026.
\href{https://arxiv.org/abs/2605.22302}{arXiv:2605.22302}.

\bibitem{GuarnieriVendramin}
L.~Guarnieri and L.~Vendramin,
\emph{Skew braces and the Yang--Baxter equation},
Math. Comp. \textbf{86} (2017), no.~307, 2519--2534.
\href{https://doi.org/10.1090/mcom/3161}{doi:10.1090/mcom/3161}.

\bibitem{Kourovka}
E.~I. Khukhro and V.~D. Mazurov (eds.),
\emph{Unsolved Problems in Group Theory: The Kourovka Notebook},
21st ed., 2026.
\href{https://arxiv.org/abs/1401.0300}{arXiv:1401.0300}.

\bibitem{LucchiniMenegazzoMorigiSplit}
A.~Lucchini, F.~Menegazzo and M.~Morigi,
\emph{On the existence of a complement for a finite simple group in its
automorphism group},
Illinois J. Math. \textbf{47} (2003), no.~1--2, 395--418.
\href{https://doi.org/10.1215/ijm/1258488162}{doi:10.1215/ijm/1258488162}.

\bibitem{SmoktunowiczVendramin}
A.~Smoktunowicz and L.~Vendramin,
\emph{On skew braces}, with an appendix by N.~Byott and L.~Vendramin,
J. Combin. Algebra \textbf{2} (2018), no.~1, 47--86.
\href{https://doi.org/10.4171/JCA/2-1-3}{doi:10.4171/JCA/2-1-3}.

\bibitem{TsangSolvable}
C.~S.~Y.~Tsang,
\emph{Non-abelian simple groups which occur as the type of a Hopf--Galois
structure on a solvable extension},
Bull. Lond. Math. Soc. \textbf{55} (2023), no.~5, 2324--2340.
\href{https://doi.org/10.1112/blms.12860}{doi:10.1112/blms.12860}.

\end{thebibliography}
}
\end{document}